\numberwithin{equation}{section}
\newtheorem{theorem}{Theorem}[section]
\newtheorem{lemma}{Lemma}[section]
\newtheorem{definition}{Definition}[section]
\newcommand{\gaussian}[2]
{\genfrac{(}{)}{0pt}{}{#1}{#2}_{\textstyle q}}
\title{Computation of q-Binomial Coefficients\\ with the $P(n,m)$ Integer Partition Function}
\author{M.J. Kronenburg}
\date{}
\begin{document}

\maketitle

\begin{abstract}
Using $P(n,m)$, the number of integer partitions of $n$ into exactly $m$ parts,
which was the subject of an earlier paper,
$P(n,m,p)$, the number of integer partitions of $n$ into exactly $m$ parts with each part at most $p$,
can be computed in $O(n^2)$, and the q-binomial coefficient can be computed in $O(n^3)$.
Using the definition of the q-binomial coefficient, some properties of the q-binomial coefficient
and $P(n,m,p)$ are derived.
The q-multinomial coefficient can be computed as a product of q-binomial coefficients.
A formula for $Q(n,m,p)$, the number of integer partitions of $n$ into exactly $m$ distinct parts
with each part at most $p$, is given.
Some formulas for the number of integer partitions with each part between a minimum and a maximum are derived.
A computer algebra program is listed implementing these algorithms 
using the computer algebra program of the earlier paper.
\end{abstract}

\noindent
\textbf{Keywords}: q-binomial coefficient, integer partition function.\\
\textbf{MSC 2010}: 05A17 11B65 11P81

\section{Definitions and Basic Identities}

Let the coefficient of a power series be defined as:
\begin{equation}
 [q^n] \sum_{k=0}^{\infty} a_k q^k = a_n
\end{equation}
Let $P(n)$ be the number of integer partitions of $n$, and let $P(n,m)$ be the
number of integer partitions of $n$ into exactly $m$ parts.
Let $P(n,m,p)$ be the number of integer partitions of $n$ into exactly $m$ parts with each part at most $p$,
and let $P^*(n,m,p)$ be the number of integer partitions of $n$ into at most $m$ parts with each part at most $p$,
which is the number of Ferrer diagrams that fit in a $m$ by $p$ rectangle:
\begin{equation}\label{pnmpsum}
 P^*(n,m,p) = \sum_{k=0}^m P(n,k,p)
\end{equation}
Let the following definition of the q-binomial coefficient,
also called the Gaussian polynomial, be given.
\begin{definition}
The q-binomial coefficient is defined by \cite{A84,AAR}:
\begin{equation}\label{gaussdef}
 \gaussian{m+p}{m} = \prod_{j=1}^m \frac{1-q^{p+j}}{1-q^j}
\end{equation}
\end{definition}
The q-binomial coefficient is the generating function of $P^*(n,m,p)$ \cite{A84}:
\begin{equation}\label{pnmpgen}
 P^*(n,m,p) = [q^n] \gaussian{m+p}{m}
\end{equation}
The q-binomial coefficient is a product of cyclotomic polynomials \cite{knuth}.

\section{Properties of q-Binomial Coefficients and $P(n,m,p)$}

Some identities of the q-binomial coefficient are proved from its definition,
and from these some properties of $P^*(n,m,p)$ and $P(n,m,p)$ are derived.
\begin{theorem}
\begin{equation}
 \gaussian{m+p}{m} = \gaussian{m+p-1}{m-1} + q^m \gaussian{m+p-1}{m}
\end{equation}
\end{theorem}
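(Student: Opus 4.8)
The plan is to prove the identity directly from the product definition \eqref{gaussdef}, which is the style used throughout this section. First I would expand the two q-binomial coefficients on the right-hand side as products. Writing $m+p-1=(m-1)+p$, definition \eqref{gaussdef} gives
\begin{equation}
 \gaussian{m+p-1}{m-1}=\prod_{j=1}^{m-1}\frac{1-q^{p+j}}{1-q^{j}} ,
\end{equation}
and writing $m+p-1=m+(p-1)$ it gives, after shifting the index in the numerator,
\begin{equation}
 \gaussian{m+p-1}{m}=\prod_{j=1}^{m}\frac{1-q^{(p-1)+j}}{1-q^{j}}
  =\frac{1-q^{p}}{1-q^{m}}\prod_{j=1}^{m-1}\frac{1-q^{p+j}}{1-q^{j}} .
\end{equation}

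Let $A=\prod_{j=1}^{m-1}(1-q^{p+j})/(1-q^{j})$. Factoring $A$ out of the right-hand side of the claimed identity turns it into $A\left(1+q^{m}\dfrac{1-q^{p}}{1-q^{m}}\right)$, and the key step is the one-line simplification of the bracket over the common denominator $1-q^{m}$: its numerator is $(1-q^{m})+q^{m}(1-q^{p})=1-q^{m+p}$. Hence the right-hand side equals $A(1-q^{m+p})/(1-q^{m})$. On the other side, peeling off the $j=m$ factor of \eqref{gaussdef} gives $\gaussian{m+p}{m}=A(1-q^{p+m})/(1-q^{m})$, so the two sides agree.

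The only real difficulty is the bookkeeping: the two q-binomials on the right play asymmetric roles (the first retains the parameter $p$, the second shifts it to $p-1$), and one must shift the index in the second product correctly so that its factors $1-q^{p+1},\dots,1-q^{p+m-1}$ coincide with those making up $A$, leaving exactly the new factor $1-q^{p}$ to be tracked against the missing factor $1-q^{p+m}$; once this alignment is set up the computation is immediate. I would also remark that a combinatorial proof is available through \eqref{pnmpgen}: split the Ferrers diagrams counted by $P^{*}(n,m,p)$ according to whether they have at most $m-1$ parts (they then fit in an $(m-1)\times p$ box, contributing $\gaussian{m+p-1}{m-1}$) or exactly $m$ parts (delete the first column to obtain a diagram of $n-m$ in an $m\times(p-1)$ box, contributing $q^{m}\gaussian{m+p-1}{m}$); but the algebraic argument above is shorter and fits the surrounding development.
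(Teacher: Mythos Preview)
Your proof is correct and is essentially the same as the paper's: both expand the three q-binomials via the product definition \eqref{gaussdef}, factor out the common product $\prod_{j=1}^{m-1}(1-q^{p+j})/(1-q^{j})$, and reduce the identity to the elementary one-line check $(1-q^{m})+q^{m}(1-q^{p})=1-q^{m+p}$. The only difference is presentational (you name the common factor $A$ and add a combinatorial aside, while the paper writes the reduction as a chain of displayed equalities).
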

\begin{proof}
\begin{equation}
 \prod_{j=1}^m\frac{1-q^{p+j}}{1-q^j} 
 = \prod_{j=1}^{m-1}\frac{1-q^{p+j}}{1-q^j} + q^m \prod_{j=1}^m\frac{1-q^{p+j-1}}{1-q^j} 
\end{equation}
\begin{equation}
 \prod_{j=1}^m(1-q^{p+j}) = (1-q^m) \prod_{j=1}^{m-1}(1-q^{p+j}) + q^m \prod_{j=0}^{m-1}(1-q^{p+j})
\end{equation}
\begin{equation}
 1 = \frac{1-q^m}{1-q^{m+p}} + q^m \frac{1-q^p}{1-q^{m+p}}
\end{equation}
\begin{equation}
 1-q^{m+p} = 1-q^m+q^m(1-q^p)
\end{equation}
\end{proof}
\begin{theorem}
\begin{equation}
 P^*(n,m,p) = P^*(n,m-1,p) + P^*(n-m,m,p-1)
\end{equation}
\end{theorem}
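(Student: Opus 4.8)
The plan is to read the identity off from Theorem 2.1 by extracting the coefficient of $q^n$, using the fact recorded in \eqref{pnmpgen} that the q-binomial coefficient is the generating function of $P^*$. First I would rewrite the two q-binomial coefficients on the right-hand side of Theorem 2.1 in the canonical form $\gaussian{a+b}{a}$ so that \eqref{pnmpgen} applies directly: the term $\gaussian{m+p-1}{m-1}$ is $\gaussian{(m-1)+p}{m-1}$, whose coefficient of $q^n$ is $P^*(n,m-1,p)$; and the q-binomial coefficient $\gaussian{m+p-1}{m}$ occurring in the last term is $\gaussian{m+(p-1)}{m}$, whose coefficient of $q^n$ is $P^*(n,m,p-1)$.

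Next I would apply the operator $[q^n]$ to both sides of the identity in Theorem 2.1. The left side gives $P^*(n,m,p)$ directly. On the right side, $[q^n]\gaussian{(m-1)+p}{m-1} = P^*(n,m-1,p)$, and for the last term the prefactor $q^m$ shifts the index: $[q^n]\,q^m\gaussian{m+(p-1)}{m} = [q^{n-m}]\gaussian{m+(p-1)}{m} = P^*(n-m,m,p-1)$. Summing these gives the claimed recurrence. One should note the convention that $P^*(k,\cdot,\cdot)=0$ for $k<0$, which is consistent with $[q^{n-m}]$ of a power series being $0$ when $n<m$.

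I expect essentially no obstacle here, since the computational work is already done in Theorem 2.1; the only point requiring care is correctly matching the shifted parameter $m+p-1 = (m-1)+p = m+(p-1)$ to the appropriate arguments of $P^*$, together with the boundary cases $m=0$ or $p=0$ where the recurrence degenerates and terms with negative parameters vanish. A purely combinatorial argument is also available: a Ferrers diagram in an $m\times p$ rectangle either leaves the last row empty, hence fits in an $(m-1)\times p$ rectangle, or meets every one of the $m$ rows, in which case deleting its (necessarily full) first column removes $m$ cells and leaves a diagram of $n-m$ cells in an $m\times(p-1)$ rectangle. Since the paper develops these facts from the generating-function identity, the coefficient-extraction proof is the natural one to present.
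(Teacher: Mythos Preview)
Your proposal is correct and follows exactly the paper's approach: apply $[q^n]$ to the identity of Theorem~2.1, use \eqref{pnmpgen} to read off $P^*(n,m,p)$, $P^*(n,m-1,p)$, and (after the $q^m$ shift) $P^*(n-m,m,p-1)$. The paper's proof is essentially a one-line version of what you wrote, without the extra discussion of boundary cases or the alternative combinatorial argument.
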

\begin{proof}
Using the previous theorem:
\begin{equation}
\begin{split}
 P^*(n,m,p) & = [q^n]\gaussian{m+p}{m} = [q^n]\gaussian{m+p-1}{m-1} + [q^{n-m}]\gaussian{m+p-1}{m} \\
 & = P^*(n,m-1,p) + P^*(n-m,m,p-1) \\
\end{split}
\end{equation}
\end{proof}
From this theorem and identity (\ref{pnmpsum}) follows:
\begin{equation}\label{pnmpdif}
 P^*(n,m,p) - P^*(n,m-1,p) = P(n,m,p) = P^*(n-m,m,p-1)
\end{equation}
or equivalently:
\begin{equation}\label{pnmpdef}
 P^*(n,m,p) = P(n+m,m,p+1)
\end{equation}
From this theorem and this identity follows:
\begin{equation}
 P(n,m,p) = P(n-1,m-1,p) + P(n-m,m,p-1)
\end{equation}
\begin{theorem}
\begin{equation}
 \gaussian{m+p}{m} = \gaussian{m+p-1}{m} + q^p \gaussian{m+p-1}{m-1}
\end{equation}
\end{theorem}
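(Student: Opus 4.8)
The plan is to proceed exactly as in the proof of Theorem 2.1: substitute the product definition (\ref{gaussdef}) into all three terms, clear the common denominator $\prod_{j=1}^{m}(1-q^j)$, and reduce the resulting identity between finite products to a trivial polynomial identity. Concretely, I would first write
\begin{equation}
 \gaussian{m+p}{m} = \prod_{j=1}^m\frac{1-q^{p+j}}{1-q^j}, \qquad
 \gaussian{m+p-1}{m} = \prod_{j=1}^m\frac{1-q^{p+j-1}}{1-q^j}, \qquad
 \gaussian{m+p-1}{m-1} = \prod_{j=1}^{m-1}\frac{1-q^{p+j}}{1-q^j}.
\end{equation}
Multiplying the claimed identity by $\prod_{j=1}^{m}(1-q^j)$ turns it into
\begin{equation}
 \prod_{j=1}^m(1-q^{p+j}) = \prod_{j=1}^m(1-q^{p+j-1}) + q^p(1-q^m)\prod_{j=1}^{m-1}(1-q^{p+j}).
\end{equation}

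The next step is to peel off the shared factor. Since $\prod_{j=1}^m(1-q^{p+j-1}) = \prod_{j=0}^{m-1}(1-q^{p+j}) = (1-q^p)\prod_{j=1}^{m-1}(1-q^{p+j})$ and $\prod_{j=1}^m(1-q^{p+j}) = (1-q^{p+m})\prod_{j=1}^{m-1}(1-q^{p+j})$, dividing through by $\prod_{j=1}^{m-1}(1-q^{p+j})$ collapses everything to
\begin{equation}
 1-q^{p+m} = (1-q^p) + q^p(1-q^m),
\end{equation}
which is immediate on expanding the right-hand side. The only subtlety is that this last division should be read as an identity of rational functions (or one simply keeps the common factor and expands both sides), but this is precisely the caveat already implicit in the proof of Theorem 2.1, so I do not expect any real obstacle; the argument is entirely routine.

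A shorter alternative, which I would mention but not adopt as the main proof, is to use the symmetry $\gaussian{m+p}{m} = \gaussian{m+p}{p}$ together with Theorem 2.1 applied with $m$ and $p$ interchanged: rewriting each of the three resulting terms via the symmetry relation yields the stated identity directly. Since the excerpt has not yet recorded this symmetry, I would keep the self-contained computational proof above, in the same style as Theorem 2.1.
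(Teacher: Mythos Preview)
Your proposal is correct and follows essentially the same approach as the paper: substitute the product definition, clear the common denominator $\prod_{j=1}^m(1-q^j)$, then cancel the shared factor $\prod_{j=1}^{m-1}(1-q^{p+j})$ to reduce to the identity $1-q^{m+p}=(1-q^p)+q^p(1-q^m)$. The paper's presentation differs only cosmetically in how the cancellation is displayed, and your remark that the symmetry route is unavailable at this point is also accurate, since the symmetry theorem appears only afterward.
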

\begin{proof}
\begin{equation}
 \prod_{j=1}^m\frac{1-q^{p+j}}{1-q^j} 
 = \prod_{j=1}^m\frac{1-q^{p+j-1}}{1-q^j} + q^p \prod_{j=1}^{m-1}\frac{1-q^{p+j}}{1-q^j} 
\end{equation}
\begin{equation}
 \prod_{j=1}^m(1-q^{p+j}) = \prod_{j=0}^{m-1}(1-q^{p+j}) + q^p(1-q^m) \prod_{j=1}^{m-1}(1-q^{p+j})
\end{equation}
\begin{equation}
 1 = \frac{1-q^p}{1-q^{m+p}} + q^p \frac{1-q^m}{1-q^{m+p}}
\end{equation}
\begin{equation}
 1-q^{m+p} = 1-q^p+q^p(1-q^m)
\end{equation}
\end{proof}
\begin{theorem}
\begin{equation}
 P^*(n,m,p) = P^*(n,m,p-1) + P^*(n-p,m-1,p)
\end{equation}
\end{theorem}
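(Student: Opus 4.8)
The plan is to mirror the proof of the earlier theorem $P^*(n,m,p) = P^*(n,m-1,p) + P^*(n-m,m,p-1)$, but starting from the identity of the immediately preceding theorem, namely $\gaussian{m+p}{m} = \gaussian{m+p-1}{m} + q^p \gaussian{m+p-1}{m-1}$, rather than from the first one. Concretely, I will apply the coefficient extraction operator $[q^n]$ to both sides of that identity and then invoke (\ref{pnmpgen}), which says that $\gaussian{a+b}{a}$ is the generating function of $P^*(\cdot,a,b)$, to translate each term into a value of $P^*$.

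The key step is keeping the exponents straight. On the left, $[q^n]\gaussian{m+p}{m} = P^*(n,m,p)$. For the first term on the right I rewrite $\gaussian{m+p-1}{m} = \gaussian{m+(p-1)}{m}$, so its $[q^n]$ coefficient is $P^*(n,m,p-1)$. For the second term the prefactor $q^p$ shifts the index, and rewriting $\gaussian{m+p-1}{m-1} = \gaussian{(m-1)+p}{m-1}$ gives $[q^n]\,q^p\gaussian{m+p-1}{m-1} = [q^{n-p}]\gaussian{(m-1)+p}{m-1} = P^*(n-p,m-1,p)$. Adding the three contributions yields the asserted recurrence.

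I do not expect a genuine obstacle here: the argument is a routine transcription of the proof of the second theorem, and the only thing that needs care is the bookkeeping of which lower argument of the Gaussian polynomial plays the role of the number of parts and which plays the role of the maximal part size in each of the two right-hand terms. If a self-contained combinatorial argument were wanted instead, one could split the Ferrers diagrams that fit in an $m\times p$ box according to whether the last column ($p$) is used: those that avoid it are counted by $P^*(n,m,p-1)$, and those that use it, after deleting that full column of height... rather, after deleting one cell from each of at least one row — more precisely after removing the bottom row of length $p$ when a part equals $p$ — are counted by $P^*(n-p,m-1,p)$. But the generating-function route above is shorter and rests only on (\ref{pnmpgen}) and the preceding theorem, so that is the proof I would present.
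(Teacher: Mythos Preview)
Your proof is correct and follows exactly the paper's approach: apply $[q^n]$ to the preceding $q$-binomial identity $\gaussian{m+p}{m} = \gaussian{m+p-1}{m} + q^p \gaussian{m+p-1}{m-1}$ and invoke (\ref{pnmpgen}) to read off the three $P^*$ values. The combinatorial aside is unnecessary (and slightly garbled), but since you rightly discard it in favor of the generating-function argument, there is nothing to fix.
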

\begin{proof}
Using the previous theorem:
\begin{equation}
\begin{split}
 P^*(n,m,p) & = [q^n]\gaussian{m+p}{m} = [q^n]\gaussian{m+p-1}{m} + [q^{n-p}]\gaussian{m+p-1}{m-1} \\
 & = P^*(n,m,p-1) + P^*(n-p,m-1,p) \\
\end{split}
\end{equation}
\end{proof}
Using (\ref{pnmpdef}):
\begin{equation}
 P(n,m,p) = P(n,m,p-1) + P(n-p,m-1,p)
\end{equation}
The following theorem is a symmetry identity:
\begin{theorem}\label{gaussym}
\begin{equation}
 \gaussian{m+p}{m} = \gaussian{m+p}{p}
\end{equation}
\end{theorem}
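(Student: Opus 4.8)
The plan is to prove both sides of the identity equal a common expression that is manifestly symmetric in $m$ and $p$. First I would start from the definition (\ref{gaussdef}) and reindex the numerator product by setting $k=p+j$, so that $\prod_{j=1}^{m}(1-q^{p+j})=\prod_{k=p+1}^{p+m}(1-q^{k})$; multiplying and dividing by $\prod_{k=1}^{p}(1-q^{k})$ turns this into $\prod_{k=1}^{m+p}(1-q^{k})\big/\prod_{k=1}^{p}(1-q^{k})$. Substituting into the definition gives
\begin{equation}
 \gaussian{m+p}{m}=\frac{\prod_{k=1}^{m+p}(1-q^{k})}{\prod_{j=1}^{m}(1-q^{j})\,\prod_{k=1}^{p}(1-q^{k})},
\end{equation}
and the right-hand side is visibly invariant under exchanging $m\leftrightarrow p$.

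Then I would perform the same computation with the roles of $m$ and $p$ interchanged: applying the definition to $\gaussian{m+p}{p}=\prod_{j=1}^{p}\frac{1-q^{m+j}}{1-q^{j}}$ and reindexing the numerator with $k=m+j$ produces exactly the same right-hand side. Comparing the two displays yields $\gaussian{m+p}{m}=\gaussian{m+p}{p}$.

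As a remark I would note an alternative combinatorial route that avoids algebra: by (\ref{pnmpgen}) the coefficient $[q^{n}]\gaussian{m+p}{m}$ equals $P^{*}(n,m,p)$, the number of Ferrers diagrams fitting inside an $m\times p$ rectangle; transposing a diagram is a bijection onto the diagrams fitting inside a $p\times m$ rectangle, so $P^{*}(n,m,p)=P^{*}(n,p,m)$ for every $n$, and two power series with identical coefficients coincide.

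I do not expect a real obstacle here; the only step requiring a little care is the bookkeeping in the shift $k=p+j$, namely checking that collapsing $\prod_{k=p+1}^{p+m}(1-q^{k})$ against the full product $\prod_{k=1}^{m+p}(1-q^{k})$ is legitimate — which it is, since every factor is a nonzero formal power series — after which the symmetry is immediate.
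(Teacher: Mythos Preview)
Your proof is correct and follows essentially the same route as the paper: both arguments reduce each side to the symmetric expression $\prod_{k=1}^{m+p}(1-q^{k})\big/\bigl(\prod_{j=1}^{m}(1-q^{j})\prod_{k=1}^{p}(1-q^{k})\bigr)$, the paper via cross multiplication and you via an explicit reindexing, which amounts to the same computation. Your additional combinatorial remark (conjugation of Ferrers diagrams) is also valid and appears in the paper immediately after the theorem as a consequence rather than as the proof itself.
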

\begin{proof}
\begin{equation}
 \prod_{j=1}^m \frac{1-q^{p+j}}{1-q^j} = \prod_{j=1}^p \frac{1-q^{m+j}}{1-q^j}
\end{equation}
Using cross multiplication:
\begin{equation}
 \prod_{j=1}^p(1-q^j)\prod_{j=1}^m(1-q^{p+j}) = \prod_{j=1}^m(1-q^j)\prod_{j=1}^p(1-q^{m+j})
 = \prod_{j=1}^{m+p}(1-q^j)
\end{equation}
\end{proof}
From this theorem follows:
\begin{equation}
 P^*(n,m,p) = P^*(n,p,m)
\end{equation}
and using (\ref{pnmpdef}):
\begin{equation}
 P(n,m,p) = P(n-m+p-1,p-1,m+1)
\end{equation}
Using (\ref{pnmpsum}) and (\ref{pnmpdif}):
\begin{equation}
 P^*(n,m,p) = \sum_{k=0}^m P^*(n-k,k,p-1)
\end{equation}
Combining this identity with (\ref{pnmpgen}) and using theorem \ref{gaussym}:
\begin{equation}
 \gaussian{m+p}{p} = \sum_{k=0}^m q^k \gaussian{p+k-1}{p-1}
\end{equation}
which is identity (3.3.9) in \cite{A84}.
Taking $m=n$ in (\ref{pnmpsum}) and (\ref{pnmpdef}) and conjugation of Ferrer diagrams:
\begin{equation}\label{pnnp}
 P(2n,n,p+1) = P(n+p,p)
\end{equation}
and taking $p=n$:
\begin{equation}
 P(n) = P(2n,n) = P(2n,n,n+1)
\end{equation}
The partitions of $P(n,m,p)-P(n,m,p-1)$ have at least one part equal to $p$,
and therefore by conjugation of Ferrer diagrams:
\begin{equation}
 P(n,m,p) - P(n,m,p-1) = P(n,p,m) - P(n,p,m-1)
\end{equation}
This identity can also be derived from the other identities.\\
Using (\ref{pnmpsum}) and (5.7) in \cite{AE04}:
\begin{equation}
 \sum_{m=0}^n P(n,m,p) = P^*(n,n,p) = [q^n] \frac{1}{\prod_{j=1}^p(1-q^j)}
\end{equation}
\begin{theorem}
\begin{equation}
 \sum_{m=0}^n (-1)^m P(n,m,p) = [q^n]  \frac{1}{\prod_{j=1}^p(1+q^j)}
\end{equation}
\end{theorem}
\begin{proof}
Using (\ref{pnmpdif}) and (\ref{pnmpgen}):
\begin{equation}
\begin{split}
 & \sum_{m=0}^n (-1)^m P(n,m,p) = \sum_{m=0}^n (-1)^m P^*(n-m,m,p-1) \\
 & = \sum_{m=0}^n (-1)^m [q^{n-m}] \gaussian{m+p-1}{m} 
 = [q^n] \sum_{m=0}^n (-1)^m q^m \gaussian{m+p-1}{m} \\
\end{split}
\end{equation}
Using the negative q-binomial theorem \cite{wiki1}:
\begin{equation}
 \sum_{m=0}^{\infty} \gaussian{m+p-1}{m} t^m = \frac{1}{\prod_{j=0}^{p-1} (1-q^jt)}
\end{equation}
Taking $t=-q$ and the sum up to $n$, because only the coefficient $[q^n]$ is needed, gives the theorem.
\end{proof}

\section{The q-Multinomial Coefficient}

Let $(m_i)_{i=1}^s$ be a sequence of $s$ nonnegative integers, and let $n$ be given by:
\begin{equation}
 n = \sum_{i=1}^s m_i
\end{equation}
The q-multinomial coefficient is a product of q-binomial coefficients \cite{CP,wiki2}:
\begin{equation}
 \gaussian{n}{m_1\cdots m_s} = \prod_{i=1}^s \gaussian{\sum_{j=1}^i m_j}{m_i}
 = \prod_{i=1}^s \gaussian{n-\sum_{j=1}^{i-1}m_j}{m_i}
\end{equation}

\section{Computation of $P(n,m,p)$ with $P(n,m)$}

Let the coefficient $a_k^{(m,p)}$ be defined as:
\begin{equation}
 a_k^{(m,p)} = [q^k] \prod_{j=1}^m (1-q^{p+j})
\end{equation}
These coefficients can be computed by multiplying out the product,
which up to $k=n-m$ is $O(m(n-m))=O(n^2)$.
Using (\ref{pnmpgen}) and (\ref{pnmpdif}):
\begin{equation}
\begin{split}
 P(n,m,p) & = P^*(n-m,m,p-1) = [q^{n-m}] \prod_{j=1}^m \frac{1-q^{p+j-1}}{1-q^j} 
 = [q^{n-m}] \sum_{k=0}^{n-m} a_k^{(m,p-1)} \frac{q^k}{\prod_{j=1}^m(1-q^j)} \\
 & = \sum_{k=0}^{n-m} a_k^{(m,p-1)} [q^{n-k}] \frac{q^m}{\prod_{j=1}^m(1-q^j)} 
  = \sum_{k=0}^{n-m} a_k^{(m,p-1)} P(n-k,m) \\
\end{split}
\end{equation}
The list of the $n-m+1$ values of $P(m,m)$ to $P(n,m)$ can be computed
using the algorithm in \cite{MK} which is also $O(n^2)$,
and therefore this algorithm computes $P(n,m,p)$ in $O(n^2)$.
For computing $P^*(n,m,p)$ (\ref{pnmpdef}) can be used.

\section{Computation of q-Binomial Coefficients}

From definition (\ref{gaussdef}) the q-binomial coefficients are:
\begin{equation}
\begin{split}
 [q^n] \prod_{j=1}^m \frac{1-q^{p+j}}{1-q^j}
 & = [q^n] \sum_{k=0}^n a_k^{(m,p)} \frac{q^k}{\prod_{j=1}^m(1-q^j)}
 = \sum_{k=0}^n a_k^{(m,p)} [q^{n+m-k}] \frac{q^m}{\prod_{j=1}^m(1-q^j)} \\
 & = \sum_{k=0}^n a_k^{(m,p)} P(n+m-k,m) \\
\end{split}
\end{equation}
Because $P^*(n,m,p)=0$ when $n>mp$ and (\ref{pnmpgen}), the coefficients $[q^n]$
are nonzero if and only if $0\leq n\leq mp$.
The product coefficients $a_k^{(m,p)}$ can therefore be computed in $O(m^2p)$,
and the list of $mp+1$ values of $P(m,m)$ to $P(mp+m,m)$ can also be computed
in $O(m^2p)$ \cite{MK}.
The sums are convolutions which can be done with \texttt{ListConvolve},
and therefore this algorithm computes the q-binomial coefficients in $O(m^2p)$.
Because of symmetry theorem \ref{gaussym}, $m$ and $p$ can be interchanged when $m>p$,
which makes the algorithm $O(\textrm{min}(m^2p,p^2m))$.
Using a change of variables:
\begin{equation}
 \gaussian{n}{m} = \gaussian{m+n-m}{m}
\end{equation}
The algorithm for computing this q-binomial coefficient is $O(\textrm{min}(m^2(n-m),(n-m)^2m))$.
From this follows that when $m$ or $n-m$ is constant, then the algorithm is $O(n)$, and when
$m=cn$ for some constant $c$, then the algorithm is $O(n^3)$. Because $P^*(n,m,p)=P^*(mp-n,m,p)$
only $P^*(n,m,p)$ for $0\leq n\leq \lceil mp/2\rceil$ needs to be computed,
which makes the algorithm about two times faster. 
For comparison of results with the computer algebra program below
an alternative algorithm using cyclotomic polynomials is given.
\begin{verbatim}
QBinomialAlternative[n_,m_]:=Block[{result={1},temp},
 Do[Which[Floor[n/k]-Floor[m/k]-Floor[(n-m)/k]==1,
  temp=CoefficientList[Cyclotomic[k,q],q];
  result=ListConvolve[result,temp,{1,-1},0]],{k,n}];
 result]
\end{verbatim}
Computations show that this alternative algorithm is $O(n^4)$.

\section{A Formula for $Q(n,m,p)$}

Let $Q(n,m,p)$ be the number of integer partitions of $n$ into exactly $m$
distinct parts with each part at most $p$.
\begin{theorem}\label{qnmp}
\begin{equation}
 Q(n,m,p) = P(n-m(m-1)/2,m,p-m+1)
\end{equation}
\end{theorem}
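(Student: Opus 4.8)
The plan is to set up a bijection between the partitions counted by $Q(n,m,p)$ and those counted by $P(n-m(m-1)/2,\, m,\, p-m+1)$. A partition of $n$ into exactly $m$ \emph{distinct} parts with each part at most $p$ can be written as $\lambda_1 > \lambda_2 > \cdots > \lambda_m \geq 1$ with $\lambda_1 \leq p$. The standard device is to subtract the staircase: define $\mu_i = \lambda_i - (m-i)$ for $i=1,\dots,m$. First I would check that the map $\lambda \mapsto \mu$ is well-defined into the target set: the strict inequalities $\lambda_i > \lambda_{i+1}$ become $\mu_i \geq \mu_{i+1}$ (weak), so $\mu$ is a partition into \emph{exactly} $m$ parts (each $\mu_i \geq \lambda_m - 0 \geq 1$ when $i=m$, and the $\mu_i$ are nonincreasing, so all parts are $\geq 1$); the largest part satisfies $\mu_1 = \lambda_1 - (m-1) \leq p - m + 1$; and the total is $\sum_i \mu_i = \sum_i \lambda_i - \sum_{i=1}^m (m-i) = n - m(m-1)/2$.

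Next I would exhibit the inverse: given $\mu_1 \geq \cdots \geq \mu_m \geq 1$ with $\mu_1 \leq p-m+1$, set $\lambda_i = \mu_i + (m-i)$. Then $\lambda_i - \lambda_{i+1} = \mu_i - \mu_{i+1} + 1 \geq 1$, so the $\lambda_i$ are strictly decreasing hence distinct, $\lambda_m = \mu_m \geq 1$, and $\lambda_1 = \mu_1 + m - 1 \leq p$, so $\lambda$ is counted by $Q(n,m,p)$ where $n = \sum \lambda_i = \sum \mu_i + m(m-1)/2$. Since the two maps are visibly mutually inverse, this is a bijection, and the counts agree. I would also note the degenerate/boundary behavior briefly: if $p - m + 1 \leq 0$, i.e.\ $p < m$, then there is no room for $m$ distinct parts each $\leq p$, and $P(\cdot,m,\text{nonpositive})$ is $0$ by convention, so the identity still holds.

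The only real obstacle is a cosmetic one: being careful about the "exactly $m$ parts" bookkeeping at the small end, i.e.\ confirming that no $\mu_i$ collapses to $0$ (it cannot, since $\mu_m = \lambda_m \geq 1$) and that parts counted with the convention for $P(n,m,p)$ match up when $n - m(m-1)/2$ or $p-m+1$ is out of the natural range. Everything else is the routine staircase shift, and no earlier result from the paper is actually needed — the statement is a self-contained combinatorial identity. An alternative, if one prefers generating functions, would be to start from $\gaussian{m+(p-m+1)}{m}$ as the generating function for $P^*(\cdot,m,p-m+1)$ via (\ref{pnmpgen}), use (\ref{pnmpdif}) to pass to $P(\cdot,m,p-m+1)$, and recognize the extra factor $q^{m(m-1)/2}$ as exactly the generating-function shift produced by imposing distinctness; but the bijective argument is shorter and I would present that.
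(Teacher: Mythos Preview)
Your proof is correct and is essentially the same ``staircase'' bijection the paper uses: subtract $(m-1,m-2,\dots,1,0)$ from a strict partition to get an ordinary partition, and add it back for the inverse. You are simply more explicit about the indexing and the boundary checks than the paper's prose version, but the underlying argument is identical.
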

\begin{proof}
The proof is with Ferrer diagrams and the "staircase" argument.
Let a normal partition be a partition into $m$ parts,
and let a distinct partition be a partition into $m$ distinct parts.
Let the parts of a Ferrer diagram with $m$ parts be indexed from small to large by $s=1\cdots m$.
Each distinct partition of $n$ contains a "staircase" partition
with parts $s-1$ and a total size of $m(m-1)/2$, and subtracting this from such a
partition gives a normal partition of $n-m(m-1)/2$, and the largest part
is decreased by $m-1$.
Vice versa adding the "staircase" partition to a normal partition of $n$
gives a distinct partition of $n+m(m-1)/2$,
and the largest part is increased by $m-1$.
When the parts of the distinct partition are at most $p$,
then the parts of the corresponding normal partition
are at most $p-(m-1)$.
Because of this $1-1$ correspondence between the Ferrer diagrams of these two types
of partitions the identity is valid.
\end{proof}

\section{Partitions with Each Part Between $p_{\rm min}$ and $p_{\rm max}$}

Let $P^\#(n,p_{\min},p_{\max})$ be the number of partitions of $n$
with each part between $p_{\min}$ and $p_{\max}$, and let
$Q^\#(n,p_{\min},p_{\max})$ be the number of partitions of $n$
into distinct parts with each part between $p_{\min}$ and $p_{\max}$,
and let $P(n,m,p_{\min},p_{\max})$ be the number of partitions of $n$ into exactly $m$ parts
with each part between $p_{\min}$ and $p_{\max}$, 
and let $Q(n,m,p_{\min},p_{\max})$ be the number of partitions of $n$ into exactly $m$ distinct parts
with each part between $p_{\min}$ and $p_{\max}$.
These functions are related by:
\begin{equation}
 P^\#(n,p_{\min},p_{\max}) = \sum_{m=0}^{\lfloor n/p_{\min}\rfloor} P(n,m,p_{\min},p_{\max})
\end{equation}
\begin{equation}
 Q^\#(n,p_{\min},p_{\max}) = \sum_{m=0}^{\lfloor n/p_{\min}\rfloor} Q(n,m,p_{\min},p_{\max})
\end{equation}
Because the Ferrer diagrams of the partitions in $P(n,m,p_{\min},p_{\max})$ and $Q(n,m,p_{\min},p_{\max})$
all have a block of $m(p_{\min}-1)$ filled, the following relations are given:
\begin{equation}\label{pminmaxdef}
 P(n,m,p_{\min},p_{\max}) = P(n-m p_{\min}+m,m,p_{\max}-p_{\min}+1)
\end{equation}
\begin{equation}\label{qminmaxdef}
 Q(n,m,p_{\min},p_{\max}) = Q(n-m p_{\min}+m,m,p_{\max}-p_{\min}+1)
\end{equation}
\begin{theorem}\label{pnmpminpmax}
\begin{equation}
 P(n,m,p_{\min},p_{\max}) = \sum_{l=0}^{\min(m,p_{\min})} (-1)^l \sum_{k=l(l+1)/2}^{n-m+l} 
 P(k-l(l-1)/2,l,p_{\min}-l)P(n-k,m-l,p_{\max})
\end{equation}
\end{theorem}
\begin{proof}
From (5.11) in \cite{AE04}:
\begin{equation}
 \sum_{n=0}^{\infty}\sum_{m=0}^{\infty} P(n,m,p_{\min},p_{\max}) q^nz^m
 = \frac{1}{\prod_{j=p_{\min}}^{p_{\max}}(1-zq^j)}
 = \frac{\prod_{j=1}^{p_{\min}-1}(1-zq^j)}{\prod_{j=1}^{p_{\max}}(1-zq^j)}
\end{equation}
From (5.9) and (5.11) in \cite{AE04}:
\begin{equation}
 \prod_{j=1}^{p_{\min}-1}(1-zq^j) = \sum_{n=0}^{\infty}\sum_{m=0}^{\infty} Q(n,m,p_{\min}-1) (-1)^m z^mq^n
\end{equation}
\begin{equation}
 \frac{1}{\prod_{j=1}^{p_{\max}}(1-zq^j)} = \sum_{n=0}^{\infty}\sum_{m=0}^{\infty} P(n,m,p_{\max}) z^mq^n
\end{equation}
\begin{equation}
\begin{split}
  & \sum_{n=0}^{\infty}\sum_{m=0}^{\infty} P(n,m,p_{\min},p_{\max}) q^nz^m \\
 & = ( \sum_{n=0}^{\infty}\sum_{m=0}^{\infty} Q(n,m,p_{\min}-1) (-1)^m q^nz^m )
   ( \sum_{n=0}^{\infty}\sum_{m=0}^{\infty} P(n,m,p_{\max}) q^nz^m ) \\
 & = \sum_{n_1=0}^{\infty}\sum_{n_2=0}^{\infty}\sum_{m_1=0}^{\infty}\sum_{m_2=0}^{\infty}
  Q(n_1,m_1,p_{\min}-1) P(n_2,m_2,p_{\max}) (-1)^{m_1} q^{n_1+n_2} z^{m_1+m_2} \\
\end{split}
\end{equation}
The coefficients on both sides must be equal, so $n_1+n_2=n$ and $m_1+m_2=m$,
which is equivalent to $n_2=n-n_1$ and $m_2=m-m_1$:
\begin{equation}\label{pnmpminpmaxeq}
 P(n,m,p_{\min},p_{\max}) = \sum_{l=0}^m (-1)^l \sum_{k=l}^{n-m+l} Q(k,l,p_{\min}-1) P(n-k,m-l,p_{\max})
\end{equation}
Application of theorem \ref{qnmp} to this identity gives this theorem.
\end{proof}
The following is a special case of $P(n,m,p_{\min},p_{\max})$:
\begin{equation}
 P(n,m,p,p) = 
\begin{cases}
 1 & \textrm{\rm if $mp=n$} \\
 0 & \textrm{\rm otherwise} \\
\end{cases}
\end{equation}
\begin{theorem}\label{qnmpminpmax}
Let $P(n,m,p_{\min},p_{\max})$ be the formula of the previous theorem:
\begin{equation}
 Q(n,m,p_{\min},p_{\max}) = (-1)^m P(n,m,p_{\max}+1,p_{\min}-1)
\end{equation}
\end{theorem}
\begin{proof}
\begin{equation}
 \sum_{n=0}^{\infty}\sum_{m=0}^{\infty} Q(n,m,p_{\min},p_{\max}) q^nz^m
 = \prod_{j=p_{\min}}^{p_{\max}} (1+zq^j)
 = \frac{\prod_{j=1}^{p_{\max}}(1+zq^j)}{\prod_{j=1}^{p_{\min}-1}(1+zq^j)}
\end{equation}
\begin{equation}
 \prod_{j=1}^{p_{\max}}(1+zq^j) = \sum_{n=0}^{\infty}\sum_{m=0}^{\infty} Q(n,m,p_{\max}) z^mq^n
\end{equation}
\begin{equation}
 \frac{1}{\prod_{j=1}^{p_{\min}-1}(1+zq^j)} = \sum_{n=0}^{\infty}\sum_{m=0}^{\infty} P(n,m,p_{\min}-1) (-1)^m z^mq^n
\end{equation}
\begin{equation}
\begin{split}
  & \sum_{n=0}^{\infty}\sum_{m=0}^{\infty} Q(n,m,p_{\min},p_{\max}) q^nz^m \\
 & = ( \sum_{n=0}^{\infty}\sum_{m=0}^{\infty} Q(n,m,p_{\max}) q^nz^m )
   ( \sum_{n=0}^{\infty}\sum_{m=0}^{\infty} P(n,m,p_{\min}-1) (-1)^m q^nz^m ) \\
 & = \sum_{n_1=0}^{\infty}\sum_{n_2=0}^{\infty}\sum_{m_1=0}^{\infty}\sum_{m_2=0}^{\infty}
  Q(n_1,m_1,p_{\max}) P(n_2,m_2,p_{\min}-1) (-1)^{m_2} q^{n_1+n_2} z^{m_1+m_2} \\
\end{split}
\end{equation}
The coefficients on both sides must be equal, so $n_1+n_2=n$ and $m_1+m_2=m$,
which is equivalent to $n_2=n-n_1$ and $m_2=m-m_1$:
\begin{equation}\label{qnmpminpmaxeq}
 Q(n,m,p_{\min},p_{\max}) = (-1)^m \sum_{l=0}^m (-1)^l \sum_{k=l}^{n-m+l} Q(k,l,p_{\max}) P(n-k,m-l,p_{\min}-1)
\end{equation}
Comparing this with (\ref{pnmpminpmaxeq}) in theorem \ref{pnmpminpmax} gives this theorem.
\end{proof}
From the generating functions in \cite{AE04} follows:
\begin{equation}
\begin{split}
 & P^\#(n,p_{\min},p_{\max}) = [q^n] \frac{1}{\prod_{j=p_{\min}}^{p_{\max}}(1-q^j)}
 = [q^n] \frac{\prod_{j=1}^{p_{\min}-1}(1-q^j)}{\prod_{j=1}^{p_{\max}}(1-q^j)} \\
 & = [q^n] \sum_{k=0}^n q^k a_k^{(p_{\min}-1,0)} \frac{1}{\prod_{j=1}^{p_{\max}}(1-q^j)} 
 = \sum_{k=0}^n a_k^{(p_{\min}-1,0)} [q^{p_{\max}+n-k}] \frac{q^{p_{\max}}}{\prod_{j=1}^{p_{\max}}(1-q^j)} \\
 & = \sum_{k=0}^n a_k^{(p_{\min}-1,0)} P(p_{\max}+n-k,p_{\max}) \\
\end{split}
\end{equation}
\begin{equation}
 Q^\#(n,p_{\min},p_{\max}) = [q^n] \prod_{j=p_{\rm min}}^{p_{\max}} (1+q^j)
  = [q^n] \prod_{j=1}^{p_{\max}-p_{\min}+1} (1+q^{p_{\min}-1+j})
\end{equation}
The following are special cases of $P^\#(n,p_{\min},p_{\max})$:
\begin{equation}
 P^\#(n,1,m) = P(n+m,m)
\end{equation}
\begin{equation}
 P^\#(n,p,p) = 
\begin{cases}
 1 & \textrm{\rm if $p$ divides $n$} \\
 0 & \textrm{\rm otherwise} \\
\end{cases}
\end{equation}
The following is lemma (5) in \cite{A83}:
\begin{lemma}\label{mylemma}
\begin{equation}
 \sum_{k=1}^m q^k \prod_{j=1}^{k-1} (1-q^j) = 1 - \prod_{j=1}^m (1-q^j)
\end{equation}
\end{lemma}
\begin{proof}
The lemma is true for $m=0$, and using induction on $m$,
when it is true for $m$, then for $m+1$:
\begin{equation}
\begin{split}
 & \sum_{k=1}^{m+1} q^k \prod_{j=1}^{k-1} (1-q^j) 
 = q^{m+1} \prod_{j=1}^m (1-q^j) + \sum_{k=1}^m q^k \prod_{j=1}^{k-1} (1-q^j) \\
 & = q^{m+1} \prod_{j=1}^m (1-q^j) + 1 - \prod_{j=1}^m (1-q^j) 
 = 1 - (1-q^{m+1}) \prod_{j=1}^m (1-q^j) = 1 - \prod_{j=1}^{m+1} (1-q^j) \\
\end{split}
\end{equation}
\end{proof}
Dividing this lemma by $\prod_{j=1}^m(1-q^j)$ and taking the coefficient $[q^n]$:
\begin{equation}
 [q^n] \sum_{k=1}^m q^k \frac{1}{\prod_{j=k}^m(1-q^j)} = \sum_{k=1}^m [q^{n-k}] \frac{1}{\prod_{j=k}^m(1-q^j)}
 = [q^n] \frac{1}{\prod_{j=1}^m(1-q^j)} - [q^n] 1
\end{equation}
and using $P(0,p,p)=1$ gives the following identity:\\
For integer $m\leq n$:
\begin{equation}
 \sum_{k=1}^{\min(m,\lfloor n/2\rfloor)} P^\#(n-k,k,m) = P^\#(n,1,m) - \delta_{n,m}
\end{equation}
Taking $m=n$ and using $P^\#(n,1,n)=P(n)$:
\begin{equation}
 \sum_{k=1}^{\lfloor n/2\rfloor} P^\#(n-k,k,n) = P(n) - 1
\end{equation}
The following was proved as lemma 1.1 in \cite{MK}:
\begin{equation}
 \sum_{k=0}^m q^k \prod_{j=k+1}^m (1-q^j) = 1
\end{equation}
Dividing this lemma by $\prod_{j=1}^m(1-q^j)$ and taking the coefficient $[q^n]$:\\
For integer $m\leq n$:
\begin{equation}
 \sum_{k=1}^m P^\#(n-k,1,k) = P^\#(n,1,m)
\end{equation}
Taking $m=n$ and using $P^\#(n,1,n)=P(n)$:
\begin{equation}
 \sum_{k=1}^n P^\#(n-k,1,k) = P(n)
\end{equation}

\section{The q-Binomial Coefficient for Negative Arguments}

From another paper \cite{MK2} the following was proved:\\
For integer $n\geq 0$ and integer $k$:
\begin{equation}
 \gaussian{n}{k} = 0 \textrm{~if~} k<0 \textrm{~or~} k>n
\end{equation}
and from that paper theorem 2.4 gives:\\
For negative integer $n$ and integer $k$:
\begin{equation}
 \gaussian{n}{k} =
 \begin{cases}
   \displaystyle (-1)^k q^{nk-k(k-1)/2} \gaussian{-n+k-1}{k} & \text{if $k\geq 0$} \\
   \displaystyle (-1)^{n-k} q^{(n-k)(n+k+1)/2} \gaussian{-k-1}{n-k} & \text{if $k\leq n$} \\
   0 & \text{otherwise} \\
 \end{cases}
\end{equation}

\section{Computer Algebra Program}

The following Mathematica\textsuperscript{\textregistered}
functions are listed in the computer algebra program below.\\
\texttt{PartitionsPList[n,pmin,pmax]}\\
Gives a list of the $n$ numbers $P(1,p_{\min},p_{\max})..P(n,p_{\min},p_{\max})$, where
$P(n,p_{\min},p_{\max})$ is the number of partitions of $n$ with each part between $p_{\min}$ and $p_{\max}$.
This algorithm is $O(n^2)$.\\
\texttt{PartitionsQList[n,pmin,pmax]}\\
Gives a list of the $n$ numbers $Q(1,p_{\min},p_{\max})..Q(n,p_{\min},p_{\max})$, where
$Q(n,p_{\min},p_{\max})$ is the number of partitions of $n$ into distinct parts
with each part between $p_{\min}$ and $p_{\max}$.
This algorithm is $O(n^2)$.\\
\texttt{PartitionsInPartsP[n,m,p]}\\
Gives the number of partitions of $n$ into exactly $m$ parts
with each part at most $p$. This algorithm is $O(n^2)$.\\
\texttt{PartitionsInPartsQ[n,m,p]}\\
Gives the number of partitions of $n$ into exactly $m$ distinct parts
with each part at most $p$,
using the formula $Q(n,m,p)=P(n-m(m-1)/2,m,p-m+1)$. This algorithm is $O(n^2)$.\\
\texttt{PartitionsInPartsPList[n,m,p]}\\
Gives a list of $n-m+1$ numbers of $P(m,m,p)$..$P(n,m,p)$.
This algorithm is $O(n^2)$.\\
\texttt{PartitionsInPartsQList[n,m,p]}\\
Gives a list of the $n-m(m+1)/2+1$ numbers $Q(m(m+1)/2,m,p)..Q(n,m,p)$,
using the formula $Q(n,m,p)=P(n-m(m-1)/2,m,p-m+1)$.
This algorithm is $O(n^2)$.\\
\texttt{QBinomialCoefficients[n,m,q]}\\
Gives the q-binomial coefficient $\binom{n}{m}_q$ as a polynomial in $q$,
where $n$ and $m$ are integers and $q$ is a symbol.
This algorithm is $O(n^3)$.\\
\texttt{QMultinomialCoefficients[mlist,q]}\\
Gives the q-multinomial coefficient $\binom{n}{m_1\cdots m_s}_q$ 
as a polynomial in $q$, where $s$ is the length of the list \texttt{mlist}
containing the integers $m_1\cdots m_s$, and where $n=\sum_{i=1}^sm_i$,
and where $q$ is a symbol.\\

Below is the listing of a Mathematica\textsuperscript{\textregistered} program
that can be copied into a notebook, using the package
taken from at least version 3 of the earlier paper \cite{MK}. The notebook must be saved
in the directory of the package file.

\begin{verbatim}
SetDirectory[NotebookDirectory[]];
<< "PartitionsInParts.m"
partprod[n_,m_,p_,s_]:=Block[{prod=ConstantArray[0,n+1]},prod[[1]]=1;
 Do[prod[[Range[p+k+1,n+1]]]+=s prod[[Range[1,n-p-k+1]]],{k,Min[m,n-p]}];
 prod]
PartitionsPList[n_Integer?Positive,pmin_Integer?Positive, 
 pmax_Integer?Positive]:=If[pmax<pmin,{}, 
 Block[{prods,parts},prods=partprod[n,pmin-1,0,-1];
  parts=PartitionsInPartsPList[n+pmax,pmax];
  ListConvolve[prods,parts,{1,1},0][[Range[2,n+1]]]]]
PartitionsQList[n_Integer?Positive,pmin_Integer?Positive, 
 pmax_Integer?Positive]:=If[pmax<pmin,{}, 
  partprod[n,pmax-pmin+1,pmin-1,1][[Range[2,n+1]]]]
PartitionsInPartsP[n_Integer?NonNegative,m_Integer?NonNegative, 
 p_Integer?NonNegative]:=If[n<m,0,
 Block[{prods,parts,result},prods=partprod[n-m,m,p-1,-1];
  parts=PartitionsInPartsPList[n,m];result=0;
  Do[result+=prods[[k+1]]parts[[n-m-k+1]],{k,0,n-m}];result]]
PartitionsInPartsQ[n_Integer?NonNegative,m_Integer?NonNegative, 
 p_Integer?NonNegative]:=If[n-m(m-1)/2<m||p<m,0,
  PartitionsInPartsP[n-m(m-1)/2,m,p-m+1]]
PartitionsInPartsPList[n_Integer?NonNegative,m_Integer?NonNegative, 
 p_Integer?NonNegative]:=If[n<m,{},
 Block[{prods,parts},prods=partprod[n-m,m,p-1,-1];
  parts=PartitionsInPartsPList[n,m];ListConvolve[prods,parts,{1,1},0]]]
PartitionsInPartsQList[n_Integer?NonNegative,m_Integer?NonNegative, 
 p_Integer?NonNegative]:=If[n-m(m-1)/2<m||p<m,{}, 
  PartitionsInPartsPList[n-m(m-1)/2,m,p-m+1]]
QBinomialCompute[N_Integer,M_Integer,q_Symbol,doq_?BooleanQ]:= 
 Block[{m=M,p=N-M,result,prods,parts,ceil},Which[m>p,m=p;p=M];
  ceil=Ceiling[(m p+1)/2];prods=partprod[ceil-1,m,p,-1];
  parts=PartitionsInPartsPList[ceil+m-1,m];
  result=PadRight[ListConvolve[prods,parts,{1,1},0],m p+1];
  result[[Range[ceil+1,m p+1]]]=result[[Range[m p-ceil+1,1,-1]]];
  If[doq,q^Range[0,Length[result]-1].result,result]]
QBinomialCoefficients[n_Integer,k_Integer,q_Symbol]:=
 If[(n>=0&&(k<0||k>n))||(n<0&&n<k<0),0,If[n>=0,QBinomialCompute[n,k,q,True], 
 If[k>=0,(-1)^k q^(n k-k(k-1)/2)QBinomialCompute[-n+k-1,k,q,True],
 (-1)^(n-k)q^((n-k)(n+k+1)/2)QBinomialCompute[-k-1,n-k,q,True]]]]
MListQ[alist_List]:=(alist!={}&&VectorQ[alist,IntegerQ])
QMultinomialCoefficients[mlist_List?MListQ,q_Symbol]:=
 If[!VectorQ[mlist,NonNegative],0,
 Block[{length=Length[mlist],bprod={1},msum=mlist[[1]],qbin},
  Do[msum+=mlist[[k]];qbin=QBinomialCompute[msum,mlist[[k]],q,False];
   bprod=ListConvolve[bprod,qbin,{1,-1},0],{k,2,length}];
  q^Range[0,Length[bprod]-1].bprod]]
\end{verbatim}

\pdfbookmark[0]{References}{}


\begin{thebibliography}{99}
\bibitem{A83}
 G.E. Andrews,
 Euler's Pentagonal Number Theorem,
 \textit{Math. Mag.} 56~(1983)~279-284.
\bibitem{A84}
 G.E. Andrews,
 \textit{The Theory of Partitions},
 Cambridge University Press, 1984.
\bibitem{AE04}
 G.E. Andrews, K. Eriksson,
 \textit{Integer Partitions},
 Cambridge University Press, 2004.
\bibitem{AAR}
 G.E. Andrews, R. Askey, R. Roy,
 \textit{Special Functions},
 Cambridge University Press, 1999.
\bibitem{CP}
 M. B\'{o}na, \textit{Combinatorics of Permutations},
 2nd ed., Taylor \& Francis, 2012.
\bibitem{knuth}
 D.E. Knuth, H.S. Wilf,
 The Power of a Prime That Divides a Generalized Binomial Coefficient,
 Chapter 36 in D.E. Knuth, \textit{Selected Papers on Discrete Mathematics},
 CSLI Publications, 2001.
\bibitem{MK}
  M.J. Kronenburg,
  Computation of P(n,m), the Number of Integer Partitions of n
  into Exactly m Parts,
  \href{https://arxiv.org/abs/2205.04988}{{\tt arXiv:2205.04988}}{\tt~[math.NT]}
\bibitem{MK2}
 M.J. Kronenburg,
 The q-Binomial Coefficient for Negative Arguments
 and Some q-Binomial Summation Identities,
  \href{https://arxiv.org/abs/2211.08256}{{\tt arXiv:2211.08256}}{\tt~[math.CO]}
\bibitem{WPPP}
E.W. Weisstein, \textit{q-Binomial Coefficient}.
From Mathworld - A Wolfram Web Resource.
\href{https://mathworld.wolfram.com/q-BinomialCoefficient.html}
{{\tt https://mathworld.wolfram.com/q-BinomialCoefficient.html}}
\bibitem{wiki1}
 Wikipedia, \textit{Gaussian binomial coefficient},\\
 \href{https://en.wikipedia.org/wiki/Gaussian\_binomial\_coefficient}
 {{\tt https://en.wikipedia.org/wiki/Gaussian\_binomial\_coefficient}}
\bibitem{wiki2}
 Wikipedia, \textit{Multinomial Theorem},\\
 \href{https://en.wikipedia.org/wiki/Multinomial\_theorem}
 {{\tt https://en.wikipedia.org/wiki/Multinomial\_theorem}}
\bibitem{W03}
  S. Wolfram, 
  \textit{The Mathematica Book}, 5th ed.,
  Wolfram Media, 2003.
\end{thebibliography}
\end{document}